\def\a{\alpha}       \def\b{\beta}        \def\g{\gamma}
       \def\la{\lambda}     \def\om{\omega}
       \def\t{\theta}       \def\f{\phi}
\def\ch{\chi}        
\def\e{\varepsilon}       
           \def\O{\Omega}
         \def\La{\Lambda}
\def\D{{\mathbb D}}     
\def\C{{\mathbb C}}     \def\N{{\mathbb N}}
\def\ca{{\mathcal A}}   \def\cb{{\mathcal B}}
\def\ch{{\mathcal H}}   
   \def\cm{{\mathcal M}}
\def\({\left(}       \def\){\right)}
\newtheorem{prop}{\sc Proposition}
\newtheorem{thm}[prop]{\sc Theorem}
\begin{document}
\title[Invertible and isometric WCO]{Invertible and isometric weighted composition operators}
\author[A. Mas]{Alejandro Mas}
\address{Departamento de Matem\'aticas, Universidad Aut\'onoma de
Madrid, 28049 Madrid, Spain}
\email{alejandro.mas@uam.es}
\author[D. Vukoti\'c]{Dragan Vukoti\'c}
\address{Departamento de Matem\'aticas, Universidad Aut\'onoma de
Madrid, 28049 Madrid, Spain} \email{dragan.vukotic@uam.es}
\subjclass[2010]{47B33, 30H10}
\keywords{Weighted composition operator; spaces of analytic functions; isometry; invertible operators.}
\date{08 May, 2021.}
\begin{abstract}
We consider abstract Banach spaces of analytic functions on general bounded domains that satisfy only a minimum number of axioms. We describe all invertible (equivalently, surjective) weighted composition operators acting on such spaces. For the spaces of analytic functions in the disk whose norm is given in terms of a natural seminorm (such as the typical spaces given in terms of derivatives), we describe all surjective weighted composition operators that are isometric. This generalizes a number of known results.
\end{abstract}

\maketitle

\section{Introduction}
 \label{sec-intro}

\subsection{Weighted composition operators}
 \label{subsec-sbjct}
For a function $F$ analytic in a planar domain $\Omega$ and an analytic map $\f$ of $\Omega$ into itself, the \textit{weighted composition operator\/} (often abbreviated as WCO) $W_{F,\f}$ with \textit{symbols\/} $F$ and $\f$ is defined formally by the formula $W_{F,\f}f = F (f\circ\f)$ as the composition followed by multiplication. Its boundedness depends on the space of analytic functions on which one considers it. Such operators are relevant for various reasons:
\begin{itemize}
 \item
They generalize both the pointwise multiplication operators (when $\f(z)\equiv z$) and the composition operators (when $F\equiv 1$);
 \item
The only surjective isometries of some classical spaces of analytic functions (Hardy, Bergman, quotient Bloch space) are precisely of this type (\textit{cf.\/} \cite{Fo1, Fo2}, \cite{Kol}, \cite{CW});
 \item
Some classical operators such as the Ces\`aro operator \cite{CS} and the Hilbert matrix operator \cite{DiS} can be written as average values of weighted composition operators;
 \item
Close connections have been found between the boundedness \cite{Shim} or compactness \cite{Sm} of certain weighted composition operators on weighted Bergman spaces and the well-known  Brennan conjecture on integrability of derivatives of conformal maps.
\end{itemize}
The earliest explicit references on WCOs are \cite{Ka} and \cite{Ki}. There is a large body of work on the subject: \cite{BDL}, \cite{M}, \cite{CHD1, CHD2}, \cite{OZ}, \cite{CZ}, \cite{AL}, \cite{CGP}, \cite{Gu}, \cite{GLW}, \cite{LMN}, \cite{BGJJ}, to mention only some more recent references on the boundedness, compactness, or questions related to the spectra of such operators.

\subsection{The content of this note and related recent works}
 \label{subsec-cont}
The first step towards understanding the spectrum of an operator consists in understanding its invertibility. A non-trivial WCO (one for which $\f\not\equiv\,$const and $F\not\equiv 0$) is injective, hence by an application of the closed graph (or open mapping) theorem, it is an invertible operator if and only if it is onto. The point of view of the present paper is to study the properties of surjective WCOs in a unified way on general Banach spaces of analytic functions that satisfy only a handful of axioms while still obtaining that same conclusions as in the known special situations, thus covering many cases in one stroke. Several recent papers have been  guided by a similar philosophy, for example, \cite{Bo}, \cite{CT}, \cite{HLNS}, \cite{AV}. Two general theorems regarding invertibility were proved in \cite{AV}, assuming different sets of five axioms that the space should satisfy. Theorem~\ref{thm-invert-WCO} of this note proves a similar result for
a much wider class of spaces. The key contribution of the present note with respect to \cite{AV} is that by changing essentially only one axiom we can include many more spaces than the ones covered before,  while at the same time the proof becomes simpler. 
\par
Isometries of various spaces defined in terms of derivatives were characterized in \cite{HJ}. This had been done earlier in \cite{CW} for the subspace $\{f\in\cb\,:\,f(0)=0\}$ of the Bloch space, where all isometries turn out to be WCO. However, in the full Bloch space $\cb$ there are more isometries so it is still a question of interest to characterize all isometric WCO on $\cb$. In the special case when $\,W_{F,\f}$ is a composition operator ($F\equiv 1$) the isometries were characterized in two different ways in \cite{Co} and \cite{MV}. The case of isometric multipliers  ($\f(z)=z$) is simpler and was covered in \cite{ADMV}. Theorem~\ref{thm-rotn} of the present note characterizes the surjective isometries among the WCOs on a rather general class of functional Banach spaces with a translation-invariant seminorm. The results include various cases that, to the best of our knowledge, were not covered before in the literature. The key point again consists in considering an axiom on which the earlier papers did not seem to focus.  

\medskip

\section{Surjective WCOs on functional Banach spaces, revisited}
 \label{sec-fbs}

\subsection{A set of axioms}
 \label{subsec-axioms}
Here we consider general Banach spaces of analytic functions on a bounded domain $\Omega\subset\C$. We shall write $\ch(\O)$ for the algebra of functions analytic in $\O$. Of course, the case of main interest is $\O=\D$, the unit disk.
\par
In the list below, only one axiom will be different with respect to \cite[Section~3]{AV}. However, this change is fundamental since the class of admissible spaces will now be much larger, yet at the same time the proofs will be notably shorter.
\par
We consider arbitrary Banach spaces $X\subset \ch(\Omega)$ that satisfy the following axioms:
\par
\begin{description}
\item[(A1)]
 All point evaluation functionals $\La_z$ are bounded on $X$.
\item[(A2)]
  $\mathbf{1}\in X$, where $\mathbf{1}(z)\equiv 1$.
\item[(A3)]
 Whenever $f\in X$, the function $\chi f$ is also in $X$, where $\chi(z)\equiv z$.
\item[(A4)]
 For every $u\in H^\infty(\Omega)$ that does not vanish in $\O\,$ and every $f\in X$, if $fu^n\in X$ for all $n\in\N=\{1,2,3,\ldots\}$, then $fu^\a\in X$ for some positive non-integer value $\a$.
\item[(A5)]
  Each automorphism of $\Omega$ induces a bounded composition operator in $X$.
\end{description}
\par\noindent
The first axiom is essentially equivalent to the requirement that the space be Banach as it shows that convergence in norm implies uniform convergence on compact subsets of $\,\Omega$. Also, this allows the use of the closed graph theorem to show that Axiom~\textbf{(A3)} implies that the shift operator is bounded on $X$.
\par
A function $u$ is said to be a \textit{pointwise multiplier\/} of $X$ if $u f\in X$ for all $f\in X$; we write and write $u\in \cm(X)$ to denote this. Thus, Axiom~\textbf{(A3)} can be rephrased by saying that the identity function $\chi\in\cm(X)$. Together with Axiom~\textbf{(A2)}, this shows that the space $X$ contains the polynomials.
\par\smallskip
Axiom~\textbf{(A4)} may not look so natural at first sight but can be explained as follows. Axiom~\textbf{(A1)} implies that each pointwise multiplier is a bounded function analytic in $\Omega$. However, the converse is false in many spaces. For example, for the Bloch space $\cb$ of the disk, it is well known that $u\in \cm(\cb)$ if and only if $u\in H^\infty$ and
\begin{equation}
 \sup_{z\in\D} (1-|z|^2) \log \frac{2}{1-|z|^2} |u^\prime (z)| <\infty\,.
 \label{eq-mult-cond}
\end{equation}
This was proved in several papers by different authors around 1990; see, for example, \cite{BS}. The situation is more complicated in the Dirichlet space where other conditions have to be added to the boundedness assumption on $u$; we refer the reader to \cite{St}. The spaces of the disk for which $\cm(X)=H^\infty$ have been characterized in \cite{AV} in terms of a domination property.
\par
In view of the above, assuming the condition $\cm(X) = H^\infty(\Omega)$ (that is: every bounded analytic function is a multiplier of $X$ into itself) would not provide a remedy as this would not cover the Bloch or the Dirichlet space. The next step would be to think of a weaker condition, assuming that every bounded non-vanishing function could be ``compressed'' in order to be made into a multiplier:
\begin{itemize}
 \item
For every $u\in H^\infty(\Omega)$ that does not vanish in the disk,  $fu^\a\in X$ for some non-integer value $\a>0$.
\end{itemize}
However, one can easily check that again this property is not satisfied in $\cb$. This condition can be weakened further:
\begin{itemize}
 \item
For every $u\in H^\infty(\Omega)$ that does not vanish in the disk and for every $f\in X$, if $fu\in X$ then $fu^\a\in X$ for some non-integer value $\a>0$.
\end{itemize}
It turns out that this property is satisfied in $\cb$ but it is not obvious how to verify it for the minimal analytic Besov space $B^1$. However, one can weaken the above requirement a little more, thus formulating our Axiom~\textbf{(A4)}. It turns out that it is fulfilled in most ``reasonable'' spaces of the disk.
\par\smallskip
\subsection{Spaces that satisfy our axioms}
 \label{subsec-spaces}
We review a partial list of spaces of analytic functions in the disk that satisfy all of the above axioms. The list is a bit larger than the list of the spaces satisfying the axioms from \cite{AV}.
\par\smallskip
$\bullet$ $H^\infty$, the (Hardy) space of all \textit{bounded analytic functions\/} in $\D$, equipped with the norm $\|f\|_\infty = \sup_{z\in\D}|f(z)|$, and the \textit{disk algebra\/} $\ca=H^\infty\cap C(\overline{\D})$, its subspace with the same norm.
\par\smallskip
$\bullet$ The standard \textit{Hardy spaces\/} $H^p$, $1\le p<\infty$, consisting of all $f\in\ch(\D)$ for which
$$
 \|f\|_{H^p} = \sup_{0\le r<1} \( \int_0^{2\pi} |f(r e^{i \t})|^p \frac{d\t}{2\pi}\)^{1/p} <\infty\,.
$$
\par
$\bullet$ The \textit{weighted Bergman spaces\/} $A^p_\a$, $1\le p<\infty$, $-1<\a<\infty$, of all $f\in\ch(\D)$ such that
$$
 \|f\|_{A^p_\a} = \( (\a+1) \int_\D |f(z)|^p (1-|z|^2)^\a dA(z)\)^{1/p} <\infty\,,
$$
where $dA$ denotes the normalized Lebesgue area measure on $\D$: $dA(z)=r dr d\t/\pi$, $z=re^{i\t}$.
\par\smallskip
$\bullet$
The general \text{mixed-norm spaces\/} $H(p,q,\a)$ of all functions $f$ in $\ch(\D)$ for which
$$
 \|f\|_{p,q,\a} = \( \a q \int_0^1 (1-r^2)^{\alpha q-1} M_p^q(r,f) 2 r dr\)^{1/q} <\infty\,, \quad 0<p\le\infty\,,\ 0<q<\infty\,, \ 0<\alpha<\infty\,,
$$
and
$$
 \|f\|_{p,\infty,\a} = \sup_{0\le r<1} (1-r^2)^{\alpha} M_p(r,f) <\infty\,.
$$
Note that $H(p,p,1/p)=A^p$.
\par\smallskip
$\bullet$ The (Korenblum-type) \textit{growth spaces\/}  $\mathcal{A}^{-\g}$, $\g>0$, of all $f\in\ch(\D)$ such that
$$
 \| f \|_{\mathcal{A}^{-\gamma}}=\sup_{z\in
\mathbb{D}}(1-|z|^{2})^{\gamma}|f(z)|.
$$
\par
$\bullet$ The \textit{weighted Bloch spaces\/} $\cb^\b$, $\b>0$, defined as the set of all $f\in\ch(\D)$ such that
$$
 \| f \|_{\mathcal{B}^{\beta}}=|f(0)|+\sup_{z\in \mathbb{D}} (1-|z|^{2})^{\beta}|f'(z)|<\infty.
$$
Of course, this scale includes the standard \textit{Bloch space\/}, obtained for the value $\b=1$.
\par\smallskip
$\bullet$ The \textit{logarithmic Bloch spaces\/}  $\mathcal{B}_{\log^{\gamma}}$, $\gamma\in \mathbb{R}$, where
$$
\Vert f \Vert_{\mathcal{B}_{\log^{\gamma}}}=|f(0)|+\sup_{z\in \mathbb{D}} (1-|z|^{2})\log^{\gamma}\left( \frac{2}{1-|z|^{2}}\right)|f'(z)|<\infty.
$$
(For $\g=1$, the reader will recognize the familiar condition \eqref{eq-mult-cond}, which is part of the motivating for studying these spaces while the value $\g=0$ gives the standard Bloch space.)
\par\smallskip
$\bullet$ The space $BMOA$ of analytic functions of \textit{bounded mean oscillation\/}, defined by
$$
\Vert f \Vert_{BMOA}=|f(0)|+\sup_{a\in \mathbb{D}} \Vert f \circ \varphi_{a}-f(a)\Vert_{H^{2}}<\infty,
$$
where $\varphi_a(z)=\frac{a-z}{1-\overline{a}z}$ is the standard disk automorphism which is an involution. It is convenient to use the following equivalent norm in $BMOA$ (\textit{cf\/.} \cite[Theorem 6.2]{Gi}):
$$
 \Vert f \Vert_{\star}=|f(0)|+\sup_{a\in \mathbb{D}}\left( \int_{\mathbb{D}} |f'(z)|^{2}(1-|\varphi_{a}(z)|^{2})\, d A(z)\right)^{\frac{1}{2}}\,.
$$
\par
$\bullet$ The \textit{weighted Besov (or Dirichlet-type) spaces\/}  $B^{p,\a}$ with $1\le p <\infty$ and $-1<\a<\infty$ of all $f\in \ch(\D)$ for which
$$
\Vert f \Vert_{B^{p,\a}}=|f(0)|+\Vert f' \Vert_{A^{p}_{\a}}<\infty\,.
$$
This includes the conformally invariant \textit{analytic (diagonal) Besov spaces\/} $B^{p,p-2}$, $1<p<\infty$; a further special case $p=2$ yields the \textit{Dirichlet space\/}.
\par\smallskip
$\bullet$ The \textit{minimal Besov space\/} $B^1$ which can be defined in terms of atomic decomposition (infinite sums of disk automorphisms with $\ell^1$ coefficients) but is more conveniently seen as the space of all analytic functions in $\D$ such that $f^{\prime\prime}\in A^1$, equipped with the norm
$$
 \Vert f \Vert_{B^{1}}=|f(0)|+|f'(0)|+\int_{\mathbb{D}} |f^{\prime\prime}(z)|\, dA(z) \,.
$$
\par\medskip
It would take us afar to display all the technicalities in checking that the spaces from the above list fulfill the five axioms. However, it seems convenient to give several indications.
\par\smallskip
For all of the spaces listed, Axiom~\textbf{(A1)} is satisfied in view of the appropriate and well-known pointwise estimates (actually, they are required in order to show that the space in question is complete). For Hardy and Bergman spaces, as well as for the Bloch space, we refer the reader to \cite{D, DuS, HKZ} and for analytic Besov spaces, to \cite{HW}. Just to illustrate some arguments, for $f\in \mathcal{B}^\b$ and $z=re^{it} \in \mathbb{D}$, the standard pointwise estimate:
\begin{align*}
\vert f(z)-f(0)\vert & \leq \int_{0}^{r} \left| f'(\rho e^{it})  \right| \, d\rho  \leq \Vert f \Vert_{\mathcal{B}^{\beta}} \int_{0}^{r} \frac{1}{(1-\rho^{2})^{\beta}} \, d \rho
\leq \left\{
 \begin{array}{ll}
 C \Vert f \Vert_{\mathcal{B}^{\beta}} & \mathrm{if } \, \beta<1, \\ [2mm]
 \frac{1}{2}\log \left( \frac{1+r}{1-r}\right) \Vert f \Vert_{\mathcal{B}^{\beta}} & \mathrm{if } \,  \beta=1,  \\ [2mm]
 \frac{C}{(1-r^2)^{\beta-1}} \Vert f \Vert_{\mathcal{B}^{\beta}}& \mathrm{if } \,  \beta>1,
 \end{array}
 \right.
\end{align*}
shows that Axiom~\textbf{(A1)} is satisfied.
\par
Since $BMOA$ and analytic Besov spaces are contained in $\cb$ and the inclusion is continuous, the above estimate for $\b=1$ can be used.
\par
For the logarithmic Bloch space, one can actually produce a unified estimate, because an  associated integral is convergent independently of $\g$:
$$
 \vert f(z)-f(0)\vert \leq \Vert f \Vert_{\mathcal{B}_{\log^{\gamma}}} \int_{0}^{r} \frac{1}{(1-\rho^{2})\log^{\gamma}\left( \frac{2}{1-\rho^{2}}\right)} \, d \rho \lesssim \frac{1}{(1-r^{2})^{\varepsilon}}\,,
$$
for $f\in \mathcal{B}_{\log^{\gamma}}$, $z=re^{it} \in \mathbb{D}$, and $\e>0$.
\par\smallskip
Axiom~\textbf{(A2)} holds trivially in all spaces from our list.
\par\smallskip
Axiom~\textbf{(A3)} is trivially verified in Hardy, weighted Bergman, and growth spaces. In Bloch-type and Besov-type spaces, essentially, one has to use the fact that a function has better properties than its derivative (in terms of boundedness and integrability). In order to estimate $|(\chi f)^\prime(z)|=|f(z)+zf^\prime(z)|$, it then suffices to add up the obvious estimates for the derivative and the function.
\par
In $BMOA$, we can argue as follows:
\begin{align*}
 \Vert \chi f \Vert_{\star}^{2} & =\sup_{a\in \mathbb{D}}\int_{\mathbb{D}} |zf'(z)+f(z)|^{2}(1-|\varphi_{a}(z)|^{2})\, d A(z)
\\
 &\lesssim \sup_{a\in \mathbb{D}}\int_{\mathbb{D}} |f'(z)|^{2}(1-|\varphi_{a}(z)|^{2})\, d A(z) + \sup_{a\in \mathbb{D}}\int_{\mathbb{D}} |f(z)|^{2}(1-|\varphi_{a}(z)|^{2})\, d A(z)
\\
 &\lesssim \left( 1+ \sup_{a\in \mathbb{D}}\int_{\mathbb{D}} \left( 1+\frac{1}{2} \log \frac{1+|z|}{1-|z|} \right)^{2}(1-|\varphi_{a}(z)|^{2})\, d A(z) \right) \Vert f\Vert_{\star}^{2}
\\
& \leq \left( 1+\int_{\mathbb{D}} \left( 1+\frac{1}{2} \log \frac{1+|z|}{1-|z|} \right)^{2}\, d A(z)  \right) \Vert f\Vert_{\star}^{2}\,,
\end{align*}
where the integral in the last line is convergent (as is usual, $\lesssim$ means that the quantity on the left can be bounded by the quantity on the right times a fixed constant).
\par
In $B^1$, it is convenient to use the Littlewood-Paley formula to check that the axiom is satisfied.
\par\smallskip
The main issue is, of course, checking our Axiom~\textbf{(A4)}. This is quite clear for the disk algebra, Hardy spaces, weighted Bergman spaces or growth spaces.
\par
Given $f \in \mathcal{B}^{\beta}$ and a non-vanishing function  $u\in H^{\infty}$ such that   $fu^{n}\in  \mathcal{B}^{\beta}$ for all $n\in \mathbb{N}$, if $\alpha >1$ then
$$
 \Vert fu^{\alpha} \Vert_{\mathcal{B}^{\beta}} = \vert f(0)u^{\alpha}(0) \vert+\sup_{z\in \mathbb{D}} (1-|z|^{2})^{\beta}\vert  f'(z)u^{\alpha}(z)+\alpha u^{\alpha-1}(z)u'(z)f(z)\vert.
$$
Since $\,\vert f(0)u^{\alpha}(0) \vert \le \Vert f\Vert_{\mathcal{B}^{\beta}} \Vert u \Vert_{H^{\infty}}^{\alpha}
$
and
\begin{eqnarray*}
 \vert f'(z)u^{\alpha}(z)+\alpha u^{\alpha-1}(z)u'(z)f(z)\vert &=& \vert (1-\alpha) f'(z)u^{\alpha}(z)+\alpha u^{\alpha-1}(z)\left( f'(z)u(z)+u'(z)f(z)\right)\vert
\\
 & \le & (\alpha-1) \vert   f'(z)u^{\alpha}(z)\vert +\alpha \Vert u\Vert^{\alpha-1}_{H^{\infty}} \vert (fu)'(z)\vert,
\end{eqnarray*}
we have
\begin{align*}
\Vert fu^{\alpha} \Vert_{\mathcal{B}^{\beta}}&\leq \Vert u \Vert_{H^{\infty}}^{\alpha}\Vert f \Vert_{\mathcal{B}^{\beta}}+\alpha  \Vert u\Vert^{\alpha-1}_{H^{\infty}}\Vert fu \Vert_{\mathcal{B}^{\beta}} +(\alpha-1)  \Vert u\Vert^{\alpha}_{H^{\infty}}\Vert f \Vert_{\mathcal{B}^{\beta}}.
\end{align*}
A similar argument works for logarithmic Bloch spaces and $BMOA$.
\par
It is the case of $B^1$ that requires most work. It also explains why we may need values $n>1$ in Axiom~\textbf{(A4)}. Given   $f \in B^{1}$ and a non-vanishing function  $u\in H^{\infty}$ such that   $fu^{n}\in  B^{1}$ for all $n\in \mathbb{N}$, if $\alpha >2$ then
\begin{eqnarray*}
 |(u^\alpha f)^{\prime\prime}(z)| &=& |\alpha (\alpha-1) u^{\alpha-2}(z)(u'(z))^{2}f(z)+\alpha u^{\alpha-1}(z)u''(z)f(z)
\\
 & & +2\alpha u^{\alpha-1}(z) u'(z)f'(z)+u^\alpha(z)f^{\prime\prime}(z)|.
\end{eqnarray*}
Therefore
\begin{align*}
 |(u^\alpha f)''(z)|\leq  & \frac{\alpha(\alpha-1)}{2} \Vert u \Vert_{H^{\infty}}^{\alpha-2}\left| (fu^{2})''(z)-2\left( 1-\frac{1}{\alpha-1}\right)u(z)u''(z)f(z)\right.\\
 &-4\left( 1-\frac{1}{\alpha-1}\right) u(z)u'(z)f'(z) \left.-\left( 1-\frac{2}{\alpha(\alpha-1)}\right) u^2(z)f''(z)\right|.
\end{align*}
Hence,
\begin{align*}
 |(u^\alpha f)''(z)|\lesssim    \Vert u \Vert_{H^{\infty}}^{\alpha-2} | (fu^{2})''(z)| +\Vert u \Vert_{H^{\infty}}^{\alpha-2} \left| g(z) \right|,
 \end{align*}
 where
 \begin{align*}
 g(z)=-2\left( 1-\frac{1}{\alpha-1} \right)&u(z)u''(z)f(z)-4\left( 1-\frac{1}{\alpha-1}\right) u(z)u'(z)f'(z) \\
 &-\left( 1-\frac{2}{\alpha(\alpha-1)}\right) u^2(z)f''(z).
 \end{align*}
A direct computation yields
  \begin{align*}
 |g(z)|&\lesssim \Vert u \Vert_{H^{\infty}}|u''(z)f(z)+2u'(z)f'(z)+u(z)f''(z)|+ \Vert u \Vert_{H^{\infty}}^{2}|f''(z)|\\
 &= \Vert u \Vert_{H^{\infty}}|(uf)''(z)|+ \Vert u \Vert_{H^{\infty}}^{2}|f''(z)|.
 \end{align*}
Therefore
  \begin{align*}
 \int_{\mathbb{D}} |(u^\alpha f)''(z)|\, dA(z) \lesssim \Vert u \Vert_{H^{\infty}}^{\alpha-2} \Vert fu^{2}\Vert_{B^{1}} + \Vert u \Vert_{H^{\infty}}^{\alpha-1} \Vert fu\Vert_{B^{1}}+\Vert u \Vert_{H^{\infty}}^{\alpha}\Vert f\Vert_{B^{1}}.
 \end{align*}
\par\smallskip
In Hardy and weighted Bergman spaces, Axiom~\textbf{(A5)} relies on the Littlewood subordination principle. In the growth spaces and the Bloch space, this follows from the Schwarz-Pick lemma. In weighted Besov spaces one also has to use the fact that the derivative of a fixed disk automorphism is bounded from above and bounded away from zero. The Bloch space, $BMOA$, $B^1$, and the disk algebra are easily seen to be conformally invariant (in the sense that if we choose an appropriate equivalent norm in the space, then compositions with self-maps of the disk do not increase the corresponding seminorm).

\subsection{Invertibility (ontoness) of WCOs}
 \label{subsec-invert}
We now prove our first main result. It should be noted that the requirement that $\a$ be a non-integer in Axiom~\textbf{(A4)} is relevant in the proof (in order to produce a non-analytic function and thus obtain a contradiction). As we have seen, the results obtained previously such as \cite[Theorem~5 and Theorem~7]{AV} do not cover nearly as many spaces as the theorem below.
\begin{thm} \label{thm-invert-WCO}
Let $X\subset\ch(\O)$ be any functional Banach space on a bounded planar domain $\,\O$ in which the axioms \textbf{(A1) - (A4)} are satisfied, and suppose that a weighted composition operator $W_{F,\f}$ is bounded in $X$.
\par
(a) If $\,W_{F,\f}$ is invertible in $X$ (equivalently, onto), then its composition symbol $\f$ is an automorphism of \,$\O$, the multiplication symbol $\,F$ does not vanish in \,$\O$, and the inverse operator $W_{F,\f}^{-1}$ is another weighted composition operator $W_{G,\psi}$, whose symbols are:
\begin{equation}
 G=\frac{1}{F\circ \f^{-1}}, \qquad \psi=\f^{-1}\,.
 \label{inv-symb}
\end{equation}
\par
(b) Assuming that all Axioms~\textbf{(A1)-(A5)} hold, the converse is true as well, so we have the following characterization.
\par
The weighted composition operator $W_{F,\f}$ is invertible on $X$ if and only if its composition symbol $\f$ is an automorphism of \,$\O$, the multiplication symbol $\,F$ does not vanish in \,$\O$, and $1/F\in\cm (X)$.
\par
If this is the case, then $F$ is also a self-multiplier of $X$ and the inverse operator is $W_{G,\psi}$ whose symbols are given by \eqref{inv-symb}.
\end{thm}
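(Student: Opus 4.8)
The plan is to assume throughout that $W_{F,\f}$ is onto: being a non-trivial WCO it is automatically injective, so surjectivity is equivalent to invertibility, and then $T:=W_{F,\f}^{-1}$ is a bounded operator. First I would record the easy structural consequences of surjectivity by testing it on $\mathbf 1$ and on $\chi$. Since $\mathbf 1\in X$ by \textbf{(A2)}, there is $g\in X$ with $F\cdot(g\circ\f)=\mathbf 1$, which forces $F$ to be zero-free on $\O$ (and $g\circ\f$ zero-free as well); since $\chi\in X$ by \textbf{(A2)}+\textbf{(A3)}, there is $h\in X$ with $F\cdot(h\circ\f)=\chi$, and dividing this by $F$ and by $g\circ\f$ gives $z=h(\f(z))/g(\f(z))$ for $z\in\O$, so $\f$ is injective. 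Thus $\f$ is a biholomorphism of $\O$ onto the open set $\f(\O)\subseteq\O$, and the remaining content of part~(a) is to show $\f(\O)=\O$ and then identify $T$.

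The hard part — and the only place where Axiom~\textbf{(A4)} is used — is proving that this injective self-map is surjective, and I expect it to be the main obstacle. I would argue by contradiction: suppose $a\in\O\setminus\f(\O)$, and put $u:=\f-a$, a bounded, zero-free analytic function on $\O$ (with $u^{\a}$ unambiguously defined for $\O=\D$). Applying surjectivity to the powers $\chi^{k}\in X$ (iterating \textbf{(A3)}) shows $F\f^{k}=W_{F,\f}(\chi^{k})\in X$ for every $k\ge 0$ — in particular $F\in X$ — and expanding $(\f-a)^{n}$ in powers of $\f$ yields $Fu^{n}\in X$ for all $n\in\N$. Axiom~\textbf{(A4)}, applied with $f=F$, then furnishes a \emph{non-integer} $\a>0$ with $Fu^{\a}\in X$. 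Testing surjectivity once more, $Fu^{\a}=F\cdot(\r\circ\f)$ for some $\r\in X$, and cancelling $F$ shows that $\r$ agrees on the open set $\f(\O)$ with a branch of $(w-a)^{\a}$. I would then differentiate to get $(w-a)\,\r'(w)=\a\,\r(w)$ on $\f(\O)$, propagate this to all of $\O$ by the identity theorem (both sides lie in $\ch(\O)$ and agree on the nonempty open set $\f(\O)$), and evaluate at $w=a\in\O$ to force $\r(a)=0$; comparing the integer order $m$ of this zero with the local form of the differential relation gives $m=\a$, contradicting $\a\notin\Z$. This is precisely the ``non-analytic function'' obstruction flagged before the statement; the care needed is in setting up the branch of $u^{\a}$ and in the identity-theorem/order bookkeeping.

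With $\f$ now known to be an automorphism, part~(a) is immediate: $W_{F,\f}(Tf)=f$ reads $(Tf)\circ\f=f/F$ on $\O$, so $Tf=(f/F)\circ\f^{-1}=\big(1/(F\circ\f^{-1})\big)\cdot(f\circ\f^{-1})$, i.e.\ $T=W_{G,\psi}$ with $G,\psi$ as in \eqref{inv-symb}. For part~(b) I would invoke Axiom~\textbf{(A5)} through the factorization $W_{F,\f}=M_{F}\,C_{\f}$, where $M_{F}$ is multiplication by $F$ and $C_{\f}$ is the composition operator, which for an automorphism $\f$ is bounded and boundedly invertible by \textbf{(A5)}, with inverse $C_{\f^{-1}}$. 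If $W_{F,\f}$ is invertible, then $M_{F}=W_{F,\f}C_{\f^{-1}}$ and $M_{G}=W_{G,\f^{-1}}C_{\f}$ are bounded, so $F,G\in\cm(X)$, and the conjugation identity $M_{\eta\circ\f}=C_{\f}M_{\eta}C_{\f^{-1}}$ turns $G\in\cm(X)$ into $1/F=G\circ\f\in\cm(X)$. Conversely, if $\f$ is an automorphism, $F$ is zero-free, and $1/F\in\cm(X)$, the same identity gives $G=(1/F)\circ\f^{-1}\in\cm(X)$, hence $W_{G,\f^{-1}}=M_{G}\,C_{\f^{-1}}$ is bounded by \textbf{(A5)}; a one-line computation using $G\circ\f=1/F$ and $(F\circ\f^{-1})\,G=\mathbf 1$ shows $W_{G,\f^{-1}}$ is a two-sided inverse of $W_{F,\f}$, while $M_{F}=W_{F,\f}C_{\f^{-1}}$ shows $F\in\cm(X)$ as well. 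This yields the stated characterization.
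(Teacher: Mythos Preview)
Your proof is correct and follows essentially the same route as the paper: both show $F$ is zero-free and $\f$ injective by testing on $\mathbf 1$ and $\chi$, then apply Axiom~\textbf{(A4)} with $u=\f-a$ (via the binomial expansion of $F(\f-a)^n$) to produce $F(\f-a)^\a\in X$ and derive a contradiction from the non-integer power; your ODE/order-of-zero argument simply makes rigorous what the paper phrases as ``$(z-\omega)^\a$ is not analytic in $\O$,'' and your factorization $W_{F,\f}=M_F C_\f$ in part~(b) is a tidy repackaging of the paper's direct verification. One small wording slip: $F\f^k\in X$ follows from \emph{boundedness} of $W_{F,\f}$ applied to $\chi^k\in X$, not from surjectivity.
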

\begin{proof} We first prove part (a), using only axioms~\textbf{(A1)-(A4)}.
\par
Since $\textbf{1}\in X$ by Axiom~\textbf{(A2)} and $\,W_{F,\f}$ is onto by assumption, $F (f\circ\f) = \mathbf{1}$ must hold for some $f\in X$, hence $F$ cannot vanish in $\Omega$. We now show that $\f$ is an automorphism of $\O$.
\par
Since $F=W_{F,\f}\textbf{1}\in X$, we know that $\chi F \in X$ by Axiom~\textbf{(A3)}. Since the operator $\,W_{F,\f}$ is onto, there exists a function $f\in X$ such that $\chi F = F (f\circ\f)$. It follows that $f\circ\f \equiv \chi$ and from here it is immediate that $\f$ is univalent: if $\f(a) = \f(b)$ then $a = f(\f(a)) = f(\f(b)) = b$.
\par
We next show that $\f(\Omega)=\Omega$. Suppose that, on the contrary, $\f$ omits some value $\om\in \Omega$. Then the function $\f-\om$ is bounded and does not vanish in $\O$. (This is where we use the assumption about boundedness of $\O$.) Note that by Axiom~\textbf{(A2)} and Axiom~\textbf{(A3)}, the function $\chi^k\in X$, hence
$$
 F\in X, \qquad F (\f-\om)^n = \sum_{k=0}^n \binom{n}{k}\om^{n-k} W_{F,\f}(\chi^k) \in X, \qquad n\in\N\,.
$$
Hence, by Axiom~\textbf{(A4)} there exists a non-integer value $\a>0$ such that $F (\f-\om)^\a\in X$.
\par
The operator $\,W_{F,\f}$ is onto, hence for some $f\in X$ we have $F (f\circ\f)=F (\f-\om)^\a$. But $\f$ is univalent, hence not constant. Therefore the equality  $f(z)= (z-\om)^\a$ holds on the non-empty open set $\f(\Omega)$, hence in all of $\O$. However, the function on the right is not analytic in $\O$, and therefore $f\not\in X$, which is absurd.
\par
We have, thus, shown that $\f$ is an automorphism of $\Omega$.
\par
Given $g\in X$, from the assumption that the operator is invertible and solving the equation $F (f\circ\f)=g$ for $f$ yield by straightforward computation that formulas \eqref{inv-symb} hold.
\par\medskip
(b) This part is similar to the proof given in \cite{AV} but we  simplify the arguments given there.
\par\smallskip
We first prove the forward implication, starting from the assumption that $W_{F,\f}$ is invertible. By the first part of the theorem, we know that $\f$ is an automorphism of $\Omega$.
\par
To see that $1/F\in\cm(X)$, let $g\in X$ be arbitrary. Since $\,W_{F,\f}$ is onto, there exists a function $f\in X$ such that $\,F (f\circ \f)=g$. But $f\circ\f \in X$ by Axiom~\textbf{(A5)}, hence $g/F$ is analytic in $\Omega$ and $g/F\in X$.
\par\medskip
Now for the reverse implication. Assuming that all five axioms are satisfied and that $\f$ is an automorphism of $\O$, $F$ does not vanish, and $1/F\in\cm(X)$, we argue as follows. Since $\f$ is an automorphism of $\Omega$, so is its inverse function $\f^{-1}$. Given an arbitrary function $f\in X$, we also have $f/F\in X$ and therefore also $(f\circ\f^{-1})/(F\circ\f^{-1})\in X$. Thus, the operator $W_{G,\psi}$ where
$$
 G=\frac{1}{F\circ\f^{-1}}, \qquad \psi=\f^{-1}
$$
maps $X$ into itself. By an application of the closed graph theorem, which is possible thanks to Axiom~\textbf{(A1)} and the principle of uniform boundedness, $W_{G,\psi}$ is a bounded operator on $X$. Now one easily checks directly that
$$
 W_{G,\psi} W_{F,\f} f = W_{F,\f} W_{G,\psi} f
$$
for all $f\in X$, hence the operator $W_{F,\f}$ has bounded inverse $W_{G,\psi}$.
\end{proof}

\medskip

\section{Surjective isometries among WCOs on spaces with translation-invariant seminorm}
 \label{sec-isom}
Our second result proved below focuses on the spaces of Bloch or Besov type. We show that in such spaces the only surjective linear isometries among the WCOs are the most obvious ones. This was not explicitly stated, even for the Bloch space and the usual composition operators, in \cite{Co} or \cite{MV}, though in this particular case it can be deduced from the results obtained there after some discussion.
\par
Specifically, we consider function spaces $X$ of the disk that, in addition to Axioms~\textbf{(A1)-(A4)}, also satisfy the following condition:
\par
\begin{description}
\item[A6]
 The norm in $X$ is given by the formula $\|f\|=|f(0)|+p(f)$, where $p(f)$ is a seminorm which is translation-invariant: $p(f+C)=p(f)$, for all $f\in X$ and all $C \in\C$.
\end{description}
\par\noindent
It is trivial but important to note that if $X$ has this property, then $p(f)=0$ implies that $f$ is a constant function. Indeed, the function $f-f(0)$ vanishes at the origin, hence in view of the norm formula and translation invariance we have
$$
 \|f-f(0)\| = p(f-f(0)) = p(f) = 0
$$
which implies that $f=f(0)$, a constant function.
\par
One easily verifies directly that the spaces such as $\cb^\b$, $\cb_{\log^\g}$, $B^{p,\a}$, or $BMOA$ also satisfy Axiom~\textbf{(A6)}, in addition to the earlier ones.
\par
\begin{thm} \label{thm-rotn}
Let $X\subset\ch(\D)$ be a functional Banach space of the disk that satisfies Axioms~\textbf{(A1) - (A4)} and Axiom~\textbf{(A6)}. If $\,W_{F,\f}$ is a surjective isometric WCO on $X$, then $F$ is a unimodular constant and $\f$ is a rotation.
\end{thm}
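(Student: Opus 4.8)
The plan is to use Theorem~\ref{thm-invert-WCO}(a) to reduce to the case where $\f$ is an automorphism of $\D$ and $F$ has no zeros, and then to extract the conclusion from the norm formula in Axiom~\textbf{(A6)} by testing the isometry against just three functions: the constants, the coordinate function $\chi$, and the combination $F(0)\mathbf{1}-F$. Since a surjective isometry of a Banach space is invertible, Theorem~\ref{thm-invert-WCO}(a) applies and gives that $\f$ is an automorphism of $\D$ and $F$ does not vanish in $\D$. Applying the isometry to $\mathbf{1}$ and using $W_{F,\f}\mathbf{1}=F$ together with $\|\mathbf{1}\|=|\mathbf{1}(0)|+p(\mathbf{1})=1$, one gets the normalization $|F(0)|+p(F)=1$, so that $p(F)=1-|F(0)|$ and $0<|F(0)|\le 1$. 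Throughout, the isometry identity $\|W_{F,\f}g\|=\|g\|$ is used in the unpacked form
$$
 |F(0)\,g(\f(0))|+p\big(F\,(g\circ\f)\big)=|g(0)|+p(g),\qquad g\in X .
$$

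The first real step is to show $\f(0)=0$, which forces $\f$ to be a rotation since $\f$ is an automorphism. Suppose not and put $b=\f(0)\neq 0$. Take $f=(\overline{b}/|b|)\,\chi\in X$, so $f(0)=0$ and $f(b)=|b|>0$, and apply the unpacked identity to $g=f$ and to $g=f+s\mathbf{1}$ for a real parameter $s$. Subtracting the two instances, the seminorm part becomes $p\big(F(f\circ\f)+sF\big)-p\big(F(f\circ\f)\big)$, and for $s\in(-|b|,0)$ the absolute-value terms collapse so that this difference is forced to equal $|s|\,(1+|F(0)|)$. But the triangle inequality for $p$ bounds the same difference by $|s|\,p(F)=|s|\,(1-|F(0)|)$. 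Hence $1+|F(0)|\le 1-|F(0)|$, i.e.\ $F(0)=0$, contradicting that $F$ has no zeros. So $\f(0)=0$ and $\f$ is a rotation.

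Now $\f$ fixes the origin, hence $g(\f(0))=g(0)$ for every $g$. Applying the unpacked identity to $g=f$ and to $g=f+c\mathbf{1}$ with $f(0)=0$ and $c\in\C$ arbitrary, and using that $W_{F,\f}$ carries $\{f\in X:f(0)=0\}$ bijectively onto $\{w\in X:w(0)=0\}$, one obtains
$$
 p(w+cF)=p(w)+|c|\,p(F)\qquad\text{for all }w\in X\text{ with }w(0)=0\text{ and all }c\in\C .
$$
Feed this the function $w=F(0)\mathbf{1}-F$, which vanishes at the origin, with $c=1$: the left-hand side is $p\big(F(0)\mathbf{1}\big)=0$, since a constant is annihilated by $p$, while the right-hand side is $p\big(F(0)\mathbf{1}-F\big)+p(F)=2p(F)$ by translation invariance. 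Therefore $p(F)=0$, so $F$ is a constant by the remark following Axiom~\textbf{(A6)}, and then $|F(0)|=1$ by the normalization. Thus $F$ is a unimodular constant, which together with the previous step completes the proof.

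The argument is short once the test functions are in hand, and the two decisive --- and least obvious --- points, which I expect to be the only real obstacles, are: the use of a \emph{negative} shift $s<0$ in the rotation step, which turns the reverse triangle inequality into a contradiction precisely because $F(0)\neq 0$; and the choice $w=F(0)\mathbf{1}-F$ in the final step, which exploits the translation invariance of $p$ to make $p(w+F)$ collapse to zero. These are exactly the places where Axiom~\textbf{(A6)} and the non-vanishing of $F$ supplied by Theorem~\ref{thm-invert-WCO} do the essential work; everything else is bookkeeping with the norm formula.
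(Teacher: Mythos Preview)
Your proof is correct. Both arguments hinge on testing the isometry against affine functions and squeezing triangle inequalities, but the two halves are organized differently from the paper.

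For the rotation step $\f(0)=0$, the paper applies the isometry to $f_\la=1+\la z$ with $|\la|=1$, obtains equality in $\|F+\la F\f\|\le\|F\|+\|F\f\|$, and unpacks this to $|1+\la\f(0)|\ge 1+|\f(0)|$ for all unimodular $\la$, which forces $\f(0)=0$. You instead fix the direction via $f=(\overline{b}/|b|)\chi$ and slide a real shift $s\in(-|b|,0)$; subtracting the two isometry identities yields $p(F(f\circ\f)+sF)-p(F(f\circ\f))=|s|(1+|F(0)|)$, which you then confront with the triangle bound $|s|\,p(F)=|s|(1-|F(0)|)$. These are close variants of the same idea; the paper's version is slightly slicker because it needs no case analysis on the sign of $s$.

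For the step showing $F$ is a unimodular constant, the arguments genuinely diverge. The paper takes the preimage $f_0$ of $\mathbf{1}$, notes $F(0)f_0(0)=1$, and uses the isometry twice to get $|f_0(0)|\le\|f_0\|=\|\mathbf{1}\|=1$ and $|F(0)|\le\|F\|=\|\mathbf{1}\|=1$, whence $|F(0)|=1$ and $p(F)=0$ immediately. Your route instead derives the additivity identity $p(w+cF)=p(w)+|c|\,p(F)$ on the subspace $\{w:w(0)=0\}$ and collapses it with the choice $w=F(0)\mathbf{1}-F$, $c=1$. The paper's argument is shorter; yours extracts a structural equality along the way and is a nice illustration of how the translation invariance of $p$ can be leveraged twice (once to get additivity, once to make $p(w+F)$ vanish).
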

\begin{proof}
By assumption, our operator is onto, hence there exists $f_0\in X$ such that $F(f_0\circ\f)\equiv \mathbf{1}$. Therefore $F$ does not vanish in $\D$.
\par
In order to show that $\f(0)=0$, consider the function $f_\la$ given by $f_\la(z)=1+\la z$, with $|\la|=1$. By axioms \textbf{(A2)} and \textbf{(A3)}, $f_\la\in X$ and, by Axiom~\textbf{(A6)}, the  homogeneity of the seminorm and the assumption that $W_{F,\f}$ is an isometry, we have
$$
 1+\|z\| = 1 + p(z) = 1 + p(f_\la) =\|f_\la\| = \|F+\la F \f\| \le \|F\| + \|F \f\| = \|\mathbf{1}\| + \|z\| = 1+\|z\|\,.
$$
Thus, equality must hold throughout, meaning that $\|F+\la F \f\| = \|F\| + \|F \f\|$. Hence
$$
 |(F+\la F \f)(0)| + p(F+\la F \f) =  |F(0)| + p(F) + |(F \f)(0)| + p(F \f)\,.
$$
Since $p(F+\la F \f) \le p(F) + p(F \f)$, it follows that
$$
 |(F+\la F \f)(0)| \ge  |F(0)| + |(F \f)(0)|\,.
$$
We know that $F(0)\neq 0$, hence
$$
 |1+\la \f(0)| \ge  1 + |\f(0)|\,,
$$
for all $\la$ with $|\la|=1$, which is only possible when $\f(0)=0$.
On the other hand, by part (a) of Theorem~\ref{thm-invert-WCO} (using only Axioms~\textbf{(A1) - (A4)}), $\f$ must be a disk automorphism. Hence, it is a rotation.
\par
Now, recalling that the function $f_0\in X$ chosen earlier has the property $F(f_0\circ\f)\equiv \mathbf{1}$, we obtain $F(0) f_0(0)=1$. On the other hand, the same property implies that
$$
 |f_0(0)| \le \|f_0\| = \|W_{F,\f}(f_0)\| = \|\mathbf{1}\|=1 \,.
$$
Also, $|F(0)|\le\|F\|=\|W_{F,\f}(\mathbf{1})\|=\|\mathbf{1}\|=1$, hence we must have $|F(0)|=|f_0(0)|=1$. Therefore $p(F)=0$, hence $F$ is a constant of modulus one. This proves the statement.
\end{proof}
\par 
To conclude, we would like to observe that it does not seem likely that one could reduce any further the number of axioms needed to obtain the conclusions of both results of this note.
\par\medskip
\textsc{Acknowledgments}. {\small The authors were supported by PID2019-106870GB-I00, Spain.


\end{document}